\newtheorem{theorem}{Theorem}[section]
\newtheorem{corollary}[theorem]{Corollary}
\newtheorem{definition}[theorem]{Definition}
\newenvironment{proof}{{\sc Proof:}}{~\hfill $\quad\Box$}
\begin{document}

\title{Taxicab Triangle Incircles and Circumcircles}

\author{Kevin P. Thompson}

\date{}

\thispagestyle{empty}
\renewcommand\thispagestyle[1]{} 

\maketitle
\begin{abstract}
Inscribed angles are investigated in taxicab geometry with application to the existence and uniqueness of
inscribed and circumscribed taxicab circles of triangles.
\end{abstract}

\section{Introduction}

In Euclidean geometry, all triangles have a unique incircle (inscribed
circle) and a unique circumcircle (cirumscribed circle). In taxicab
geometry, the shape of a circle changes to a rotated square \cite{Krause}. Therefore,
it is a logical question whether the Euclidean incircle and circumcircle
theorems for triangles still hold. We will see that only under certain
conditions do these circles exist in the traditional sense.

When describing angle sizes we will use the definition of a t-radian \cite{ThompsonDray}
which is the natural analogue in taxicab geometry of the radian in Euclidean geometry.

\begin{figure}[b]
\centerline{\epsffile{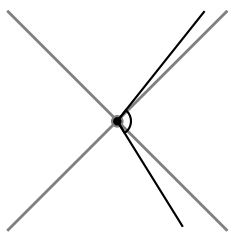}}
\caption{A taxicab angle that is not an inscribed angle}
\label{uninscribedangle}
\end{figure}

\section{Inscribed Angles}

In Euclidean geometry, all angles less than $\pi$ radians can be represented as an inscribed angle. This is not the
case in taxicab geometry (see Figure \ref{uninscribedangle}). This simple fact has far reaching consequences regarding
inscribed and circumscribed circles for triangles. And, since the position of an angle plays a role in
whether the angle is inscribed, we will begin with a number of definitions for different types of
inscribed angles in taxicab geometry.

\begin{definition}
A taxicab angle is {\em positively inscribed} if a line of slope 1 through its vertex remains outside the angle.
\end{definition}

\begin{definition}
A taxicab angle is {\em negatively inscribed} if a line of slope -1 through its vertex remains outside the angle.
\end{definition}

\begin{definition}
A taxicab angle is {\em inscribed} if it is positively inscribed, negatively inscribed, or both.
\end{definition}

\begin{definition}
A taxicab angle is {\em completely inscribed} if it is both positively inscribed and negatively inscribed.
\end{definition}

\begin{definition}
A taxicab angle is {\em strictly positively inscribed} if it is positively inscribed but not negatively inscribed.
Similarly, a taxicab angle is {\em strictly negatively inscribed} if it is negatively inscribed but not positively
inscribed.
\end{definition}

\begin{figure}[t]
\centerline{\epsffile{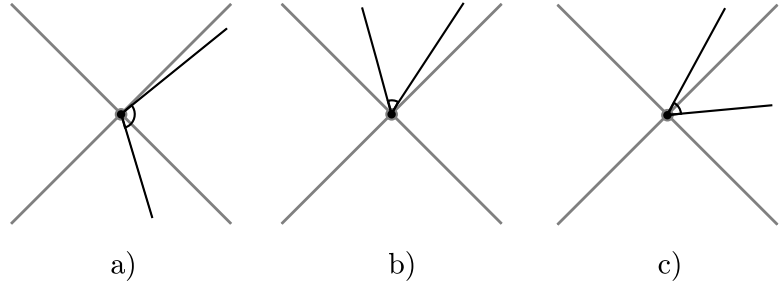}}
\caption{Inscribed taxicab angles. The angles shown are a) strictly positively inscribed, b) completely inscribed, and c) strictly negatively inscribed}
\label{inscribedangles}
\end{figure}

From the examples in Figure \ref{inscribedangles} it is clear that both the size of an
angle and its position affect whether it is inscribed. Some angles just
larger than 2 t-radians are not inscribed while other angles just
smaller than 4 t-radians are inscribed.

While not the focus of this investigation, it should be noted that the inscribed angle theorem from Euclidean
geometry does not hold in taxicab geometry. Figure \ref{inscribedangletheorem} illustrates two scenarios where
the ratio of the inscribed angle to the central angle subtended by the same arc is different. In the left figure,
$\alpha$ has measure 1 t-radian and $\theta$ has measure 2 t-radians. In the right figure, $\alpha$ again has
measure 1 t-radian but $\theta$ has measure 2.5 t-radians.

\begin{figure}[t]
\centerline{\epsffile{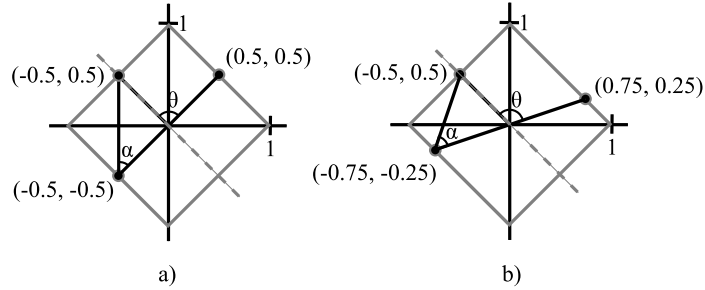}}
\caption{Failure of the Euclidean inscribed angle theorem in taxicab geometry}
\label{inscribedangletheorem}
\end{figure}

Triangles with one or more inscribed angles have special properties. A number
of these will be needed in our investigation of taxicab triangle circumcircles
and incircles.

\begin{theorem}
\label{neighboringanglestheorem}
If a triangle contains a strictly positively inscribed angle, then the other angles are negatively
inscribed.
\end{theorem}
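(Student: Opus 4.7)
My plan is to pass to coordinates and translate each inscribed-angle condition into a simple algebraic statement about the positions of the triangle's vertices. I would first reformulate the definitions: since a line $L$ through the vertex of an angle splits the plane into two open half-planes and the interior of the angle is a connected wedge, $L$ remains outside the angle if and only if the interior of the angle lies entirely in one of these half-planes. This reformulation turns each inscribed-angle condition into a half-plane test on the other two vertices.

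Then I would place the strictly positively inscribed vertex $A$ at the origin. Positive inscription at $A$ says $B$ and $C$ lie on the same side of the line $y=x$; WLOG both satisfy $y>x$. The failure of negative inscription at $A$ says $B$ and $C$ lie on opposite sides of $y=-x$; WLOG $B=(b_1,b_2)$ is in the upper wedge $b_1+b_2>0$ and $C=(c_1,c_2)$ is in the left wedge $c_1+c_2<0$.

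To conclude that the angle at $B$ is negatively inscribed, I would consider the slope-$(-1)$ line through $B$, namely $x+y=b_1+b_2$. The key computation is that $A$ satisfies $x+y=0<b_1+b_2$ and $C$ satisfies $x+y=c_1+c_2<0<b_1+b_2$, so both lie strictly in the half-plane $x+y<b_1+b_2$. Hence the interior of the angle at $B$ sits in that half-plane, so the slope-$(-1)$ line through $B$ remains outside the angle. The argument for $C$ is symmetric, using the line $x+y=c_1+c_2$ and observing that both $A$ and $B$ satisfy $x+y>c_1+c_2$.

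The main obstacle I anticipate is the definitional reformulation itself: I need to be confident that "remains outside the angle" truly corresponds to the half-plane condition, and to verify that degenerate configurations where a side of the triangle lies along a slope-$\pm 1$ line are compatible with the WLOG normalization and do not subvert the strict inequalities used in the final sign check. Once the coordinate dictionary is in place, the conclusion reduces to an inspection of the quantities $x+y$ at the three vertices.
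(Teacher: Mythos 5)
Your argument is correct, but it is a genuinely different proof from the paper's. The paper argues by angle measure: since $\alpha$ is not negatively inscribed, a neighboring angle that also failed to be negatively inscribed would have to exceed $(4-\alpha)$ t-radians, contradicting the taxicab angle-sum theorem for triangles (Theorem 14 of the taxicab trigonometry paper it cites). You instead translate everything into half-plane conditions on the vertices and verify the conclusion by comparing the values of $x+y$ at $A$, $B$, $C$; no angle measures or the angle-sum theorem are needed. Your route buys two things: it is self-contained and handles the boundary cases (a triangle side of slope $\pm 1$) explicitly rather than by appeal to a figure, and it actually proves a slightly stronger statement, since your computation only uses the failure of negative inscription at $A$ (the inequality $(b_1+b_2)(c_1+c_2)<0$) --- the positive-inscription half of the hypothesis, and hence your ``WLOG $y>x$'' normalization, is never invoked. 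Two small points of care: positive inscription corresponds to $B$ and $C$ lying weakly on the same side of $y=x$ (a boundary ray of slope $1$ is allowed, as the paper's Theorem \ref{threeinscribedanglestheorem} makes clear), so the strict form of that normalization is not quite faithful, though harmless since unused; and the step ``$A$ and $C$ lie in the half-plane, hence the interior of the angle at $B$ does'' tacitly uses that triangle angles are less than a straight angle (4 t-radians), so the angle's interior is the convex wedge spanned by the directions toward $A$ and $C$ --- worth stating, but true here. The paper's proof is shorter; yours is more elementary in its dependencies and makes the degenerate configurations transparent.
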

\begin{proof} If an angle $\alpha$ of a triangle is strictly
positively inscribed, then for a neighboring angle to not be negatively
inscribed it would have to be larger than $(4-\alpha)$ t-radians
(see Figure \ref{positiveneighbor}) thus violating the angle requirements of a
triangle in taxicab geometry (Theorem 14 of \cite{ThompsonDray}).
\end{proof}

\begin{corollary}
\label{neighboringanglescorollary}
If a triangle contains a strictly negatively
inscribed angle, then the other angles are positively inscribed.
\end{corollary}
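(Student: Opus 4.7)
The plan is to mirror the argument of Theorem \ref{neighboringanglestheorem} almost verbatim, exploiting the symmetry between the two diagonal slopes. The definitions of positively inscribed and negatively inscribed differ only in whether the reference line through the vertex has slope $1$ or slope $-1$; reflecting the entire plane across a horizontal axis swaps these two notions while preserving taxicab angle measure, triangle-ness, and the taxicab triangle angle-sum constraint invoked in the original proof.

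Concretely, I would first observe that reflection across a horizontal line sends lines of slope $1$ to lines of slope $-1$ and vice versa, so a taxicab angle is strictly negatively inscribed if and only if its reflection is strictly positively inscribed, and analogously for the ``negatively inscribed'' property of a neighboring angle. Given a triangle $T$ with a strictly negatively inscribed angle $\alpha$, apply this reflection to obtain a triangle $T'$ whose corresponding angle $\alpha'$ is strictly positively inscribed. By Theorem \ref{neighboringanglestheorem}, the other two angles of $T'$ are negatively inscribed; pulling back through the reflection, the other two angles of $T$ are positively inscribed, which is the claim.

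Alternatively, and perhaps more in the spirit of the paper, one can repeat the figure-based argument directly: if $\alpha$ is strictly negatively inscribed and a neighboring angle $\beta$ fails to be positively inscribed, then the line of slope $1$ through $\beta$'s vertex enters $\beta$, which forces $\beta > 4 - \alpha$ t-radians (the analog of the bound used in Figure \ref{positiveneighbor}, with the roles of the two diagonals swapped). This contradicts the taxicab triangle angle-sum requirement from Theorem 14 of \cite{ThompsonDray}.

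I do not anticipate a serious obstacle: the only thing to be careful about is confirming that the bound $(4-\alpha)$ t-radians in the original proof was obtained from a geometric configuration that is genuinely symmetric under swapping the two diagonals, so that the reflection/duality argument transfers without a hidden asymmetry. Once that is checked, the corollary follows immediately.
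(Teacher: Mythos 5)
Your proposal is correct and is essentially the paper's own (implicit) approach: the paper states this corollary without proof, treating it as the mirror image of Theorem \ref{neighboringanglestheorem}, and your reflection argument (or the rerun of the $(4-\alpha)$ bound with the diagonals swapped) simply makes that symmetry explicit, which works because reflection across a horizontal line is a taxicab isometry preserving t-radian measure.
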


A triangle with a completely inscribed angle can only take on certain shapes and positions. Adding additional
completely inscribed angles results in a very small class of triangles as the following theorem ilustrates.

\begin{figure}[b]
\centerline{\epsffile{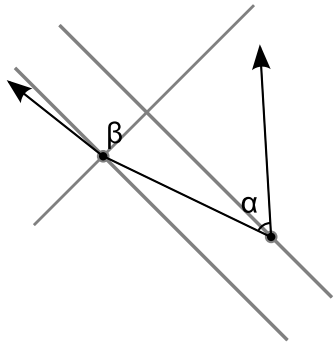}}
\caption{The neighboring angles of a strictly positively inscribed angle of a triangle must be negatively inscribed}
\label{positiveneighbor}
\end{figure}

\begin{theorem}
\label{threeinscribedanglestheorem}
A triangle has three completely inscribed angles if and only if two of its sides have slope -1 or slope 1.
\end{theorem}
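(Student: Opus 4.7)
The plan is to recast ``completely inscribed'' in a directly usable form. Through any vertex $V$ the two lines of slopes $+1$ and $-1$ cut the plane into four closed sectors of $2$ t-radians each, which I will label $N$, $E$, $S$, $W$ with opposite pairs $(N,S)$ and $(E,W)$. An angle at $V$ is completely inscribed exactly when both of its bounding rays lie in the closure of a single one of these sectors; I will attach to the angle at each vertex $X$ of triangle $ABC$ a sector label $s_X$.

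For the ``if'' direction, observe that two sides of a triangle cannot be parallel, so two sides with slopes in $\{-1,+1\}$ must have opposite slopes and must share a common vertex, which I take to be $A$. The angle at $A$ is then bounded by rays of slopes $\pm 1$ and coincides exactly with one of the four sectors, so it is completely inscribed. Placing $A$ at the origin and choosing (up to reflection) $B=(b,b)$ and $C=(-c,c)$ with $b,c>0$, we have $C-B=(-(c+b),\,c-b)$ with $c+b>|c-b|$, so the direction $C-B$ lies in the open $W$ sector. Hence the angle at $B$, bounded by a slope-$+1$ ray on the boundary of $W$ and a ray in the interior of $W$, lies in $\overline{W}$; symmetrically the angle at $C$ lies in $\overline{E}$. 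The remaining three orientations at $A$ are handled by reflection.

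For the ``only if'' direction, suppose all three angles of $ABC$ are completely inscribed and argue by cases on the number of sides of slope $\pm 1$. If there are none, each edge direction lies in the open interior of a unique sector, and its reversal lies in the open interior of the opposite sector. The common-sector conditions at the three vertices then give $s_B=\mathrm{opp}(s_A)$ (from $AB$), $s_C=\mathrm{opp}(s_A)$ (from $AC$), and $s_C=\mathrm{opp}(s_B)$ (from $BC$); combining these forces $s_A=\mathrm{opp}(s_A)$, which is impossible. If exactly one side, say $AB$, has slope $\pm 1$, then $AC$ and $BC$ are still in sector interiors, and an analogous bookkeeping forces $s_A=s_B$; but $B-A$ lies on the boundary of $s_A$, so its reversal lies on the boundary of $\mathrm{opp}(s_A)$, contradicting $s_B=s_A$ since opposite sectors share no boundary ray. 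Thus at least two sides must have slope $\pm 1$.

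The main obstacle is the bookkeeping in the ``only if'' direction, especially the boundary cases in which an edge lying exactly on a slope-$\pm 1$ line belongs to the closures of two adjacent sectors. One must verify that every admissible assignment of such an edge to one of its two candidate sectors still yields the opposition contradiction, and that the count of slope-$\pm 1$ sides is consistent across the three vertices.
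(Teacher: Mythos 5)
Your proof is correct, and its core is the same observation the paper's proof rests on: an angle is completely inscribed exactly when it fits inside one of the four closed sectors cut out by the slope $\pm 1$ lines through its vertex. The difference is organizational. The paper fixes two completely inscribed angles $\alpha,\beta$, normalizes so both sides at $A$ lie in the slope band between $-1$ and $1$, and slope-chases at the third vertex to conclude that $\gamma$ is completely inscribed precisely when its rays have slopes $1$ and $-1$; the ``if'' direction (two sides of slope $\pm 1$ force all three angles to be completely inscribed) is left essentially implicit in that one-line conclusion. You instead run a global sector-labeling argument: reversing an edge sends its sector to the opposite sector, so with zero sides of slope $\pm 1$ the three containment conditions force $s_A=\mathrm{opp}(s_A)$, and with exactly one such side they force a boundary ray of $\mathrm{opp}(s_A)$ to lie in $\overline{s_A}$, both impossible; and you verify the ``if'' direction explicitly with coordinates ($B=(b,b)$, $C=(-c,c)$), which the paper does not spell out. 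Your route is longer but buys an airtight converse and explicit boundary handling; indeed the ``main obstacle'' you flag at the end is already disposed of by your own one-boundary-edge case, since with at most one side of slope $\pm 1$ the sectors $s_A,s_B,s_C$ are pinned down by the interior rays and no ambiguous assignment of the boundary edge ever arises.
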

\begin{proof}
Suppose $\alpha$ and $\beta$ are completely inscribed angles at vertices $A$ and $B$ of a triangle. 
Without loss of generality, suppose $\alpha$ is inscribed such that $AB$ and $AC$ have slope between -1 and 1
with $AB$ having a slope strictly less than 1 (Figure \ref{threeinscribedangles}). Since $\beta$
is also completely inscribed, the slope of $BC$ is between -1 and 1, inclusive. If the slope of $BC$ is strictly less than 1, then there
exists a line of slope 1 through C passing through the angle $\gamma$ making it not positively inscribed. Similarly, if the slope of $AC$
is strictly greater than -1, then there exists a line of slope -1 through C passing through the angle $\gamma$ making it not negatively
inscribed. Therefore, the triangle has three completely inscribed angles if and only if the rays of angle $\gamma$
have slope 1 and -1.
\end{proof}

\begin{figure}[t]
\centerline{\epsffile{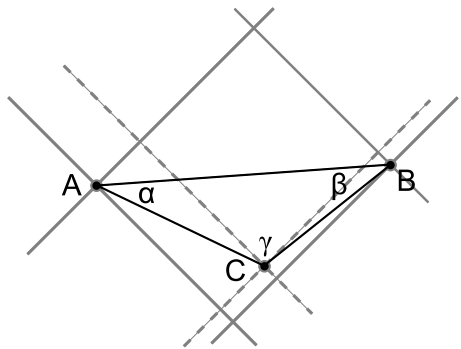}}
\caption{Defining a triangle with three completely inscribed angles}
\label{threeinscribedangles}
\end{figure}

\section{Inscribed Triangles}

Triangles whose angles are all inscribed will be of great interest to us. Having a term for this type of triangle will be
useful and, in the end, meaningful. While it may seem presumptuous, calling such a triangle an inscribed triangle will be
fully justified by the results that follow.

\begin{definition}
A triangle is {\em inscribed} if all of its angles are inscribed.
\end{definition}

Exploring some properties of inscribed triangles will aid in our study of taxicab triangle circumcircles and incircles.
There are minimal properties that an inscribed triangle must have. In fact, by imposing the restriction that all three angles must be
inscribed, a more stringent condition results. The following theorem illustrates one of the more fundamental
and useful properties of an inscribed triangle.

\begin{theorem}
\label{onecompletelyinscribedtheorem}
An inscribed triangle has at least one completely inscribed angle.
\end{theorem}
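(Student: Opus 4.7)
The plan is to argue by contradiction: assume that every angle of the inscribed triangle fails to be completely inscribed, and derive an impossibility using Theorem~\ref{neighboringanglestheorem} and Corollary~\ref{neighboringanglescorollary}.

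First I would invoke the definition of an inscribed triangle together with the definition of \emph{strictly} positively/negatively inscribed: under the contradiction hypothesis, each of the three angles is inscribed but not completely inscribed, so each one is either strictly positively inscribed or strictly negatively inscribed. I would then split into cases based on which of these two ``strict'' types occur among the three vertices. If all three angles are of the same strict type (say all strictly positively inscribed), then picking any one vertex and applying Theorem~\ref{neighboringanglestheorem} forces the other two to be negatively inscribed, contradicting that they are strictly positively inscribed. The symmetric case (all three strictly negatively inscribed) is handled identically using Corollary~\ref{neighboringanglescorollary}.

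The remaining case is that both strict types occur. Here I would pick a vertex $A$ whose angle is strictly positively inscribed; by Theorem~\ref{neighboringanglestheorem} the other two angles at $B$ and $C$ must be negatively inscribed, and since by assumption they are not completely inscribed they must be strictly negatively inscribed. But now $B$ is strictly negatively inscribed, so Corollary~\ref{neighboringanglescorollary} forces the angle at $C$ to be positively inscribed, contradicting the fact that $C$ is strictly negatively inscribed. Every case thus yields a contradiction, so at least one angle of the triangle is completely inscribed.

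The argument is essentially a short combinatorial chase, so I do not anticipate a real obstacle; the only subtlety is keeping the ``strict'' qualifiers straight, since the neighboring-angle results are stated in one direction (strict type at one vertex $\Rightarrow$ a not-necessarily-strict type at the neighbors) and the contradictions come from combining these forced neighbor conditions with the no-completely-inscribed hypothesis that upgrades ``inscribed'' to ``strictly inscribed'' of the opposite type.
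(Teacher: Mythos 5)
Your proof is correct and follows essentially the same route as the paper: a proof by contradiction in which the no-completely-inscribed hypothesis upgrades each inscribed angle to a strict type, and Theorem~\ref{neighboringanglestheorem} together with Corollary~\ref{neighboringanglescorollary} force a third angle to be both positively and negatively inscribed. Your explicit case split (all one strict type versus both types occurring) is just a slightly more verbose organization of the paper's argument, which starts WLOG from a strictly positively inscribed angle.
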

\begin{proof}
By way of contradiction and without loss of generality, assume a triangle
has a strictly positively inscribed angle $\alpha$. By
Theorem \ref{neighboringanglestheorem} the neighboring angles must be negatively
inscribed. Suppose one of these angles $\beta$ is strictly negatively
inscribed. Then by Corollary \ref{neighboringanglescorollary}, the other angles must be positively inscribed. So, $\gamma$ is completely inscribed
(Figure \ref{onecompletelyinscribed}). Therefore, an inscribed triangle cannot be constructed with no completely inscribed angles.
\end{proof}

\begin{figure}[t]
\centerline{\epsffile{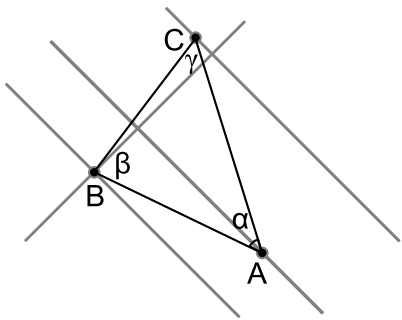}}
\caption{An inscribed triangle must have at least one completely inscribed angle}
\label{onecompletelyinscribed}
\end{figure}

The following theorem is an extension and generalization of Theorem \ref{neighboringanglestheorem} where a hint of alternating
inscribed angle properties among the angles of a triangle was seen. Theorem \ref{onecompletelyinscribedtheorem} above provides the flexibility in obtaining
the following result. The alternating property in an inscribed triangle is key to the construction of taxicab circumcircles for triangles.

\begin{theorem}
\label{neighboroppositetheorem}
Neighboring angles of an inscribed triangle have opposite inscribed angle properties (completely inscribed angles
may be selected to be either positively or negatively inscribed as needed).
\end{theorem}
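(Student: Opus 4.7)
The plan is to verify the claim pair by pair: for any two neighboring angles $\alpha$ and $\beta$ of an inscribed triangle I will show that one can be regarded as positively inscribed and the other as negatively inscribed, invoking the parenthetical whenever a completely inscribed angle is involved.

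I would split into cases on the inscribed type of $\alpha$. If $\alpha$ is strictly positively inscribed, Theorem~\ref{neighboringanglestheorem} immediately forces $\beta$ to be negatively inscribed, giving opposite properties outright. Symmetrically, if $\alpha$ is strictly negatively inscribed, Corollary~\ref{neighboringanglescorollary} forces $\beta$ to be positively inscribed. The only remaining case is $\alpha$ completely inscribed, in which $\alpha$ already carries both properties, so we simply assign to $\alpha$ the property opposite to whichever $\beta$ has; if $\beta$ is also completely inscribed, either opposite assignment of the two labels suffices. Theorem~\ref{onecompletelyinscribedtheorem} guarantees that such a completely inscribed angle exists whenever we might need the flexibility.

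The main obstacle is conceptual rather than computational. A triangle is not $2$-colorable, so one cannot expect a single global assignment of ``positive'' or ``negative'' to each of the three angles to make all three neighboring pairs alternate. The theorem must be read, as the parenthetical indicates, as asserting opposition pair by pair, with a completely inscribed angle free to wear either label in each pairwise comparison independently. Once this reading is made explicit, the case analysis above closes the argument, and the genuine content of the theorem reduces to the observation that no two neighboring angles of an inscribed triangle can both be strictly positively inscribed nor both strictly negatively inscribed, which follows directly from Theorem~\ref{neighboringanglestheorem} and Corollary~\ref{neighboringanglescorollary}.
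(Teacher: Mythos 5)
Your argument is correct and is essentially the paper's: a short case analysis on the inscribed type of an angle, using Theorem~\ref{neighboringanglestheorem} and Corollary~\ref{neighboringanglescorollary}, with completely inscribed angles supplying exactly the flexibility the parenthetical permits. The only real differences are organizational: you argue pair by pair (making explicit that the statement cannot mean a strict global alternation on a $3$-cycle), whereas the paper splits on whether the triangle contains a strictly positively or strictly negatively inscribed angle and invokes Theorem~\ref{onecompletelyinscribedtheorem} to exhibit the completely inscribed angle, a step your pairwise formulation does not actually need.
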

\begin{proof}
If an angle of a triangle is strictly positively inscribed, then by Theorem \ref{neighboringanglestheorem}
the other angles are negatively inscribed. By Theorem \ref{onecompletelyinscribedtheorem} one of these angles must be
completely inscribed. Therefore, neighboring angles in this triangle have opposite
inscribed angle properties. The result is similar when beginning with a strictly negatively inscribed angle.

The only remaining case is a triangle with only completely inscribed angles. The result follows immediately.
\end{proof}


\section{Taxicab Triangle Circumcircles}

The traditional definition of a triangle circumcircle, or circumscribed
circle, is a circle that passes through the vertex points of the triangle.
In Euclidean geometry, all triangles have a unique circumcircle. In
taxicab geometry, the existence of circumcircles is more restrictive. 

\begin{theorem}
\emph{\textbf{(Triangle Circumcircle Theorem)}} A triangle has a
taxicab circumcircle if and only if it is an inscribed triangle.
\end{theorem}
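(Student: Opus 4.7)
The plan is to prove the two directions separately. For the $(\Rightarrow)$ direction, a taxicab circumcircle is a convex diamond bounded by four segments of slope $\pm 1$, so the triangle, being the convex hull of three of its boundary points, lies inside it. Each vertex thus sits either on a side or at a corner of the diamond. If a vertex lies in the interior of a slope-$+1$ side, that side is a tangent line to the triangle at the vertex, so the slope-$+1$ line through the vertex does not enter the angle, making the angle positively inscribed; a slope-$-1$ side gives negatively inscribed; a corner gives completely inscribed. Every angle is therefore inscribed.

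For the $(\Leftarrow)$ direction, my approach is to pass to the rotated coordinates $u = x+y$, $v = y-x$. Under this change slope $\pm 1$ lines become axis-parallel, and a taxicab circle $|x-h|+|y-k|=r$ transforms into the axis-aligned square $\max(|u-u_0|,|v-v_0|)=r$ with $u_0 = h+k$ and $v_0 = k-h$ (this is the well-known isometry between the taxicab and Chebyshev metrics). The first step is to restate the inscribed-angle conditions in these coordinates: the slope-$+1$ line through a vertex $V$ is the horizontal line $v = v_V$, and it remains outside the angle at $V$ precisely when the other two vertices both lie in the half-plane $v \ge v_V$ or both in $v \le v_V$, i.e., $v_V$ is a maximum or minimum among the three $v$-coordinates; symmetrically for negatively inscribed. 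Consequently $T$ is inscribed iff every vertex lies on the boundary of the bounding box $[u_{\min},u_{\max}] \times [v_{\min},v_{\max}]$.

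The construction of the circumscribing square then comes down to a stretching argument. If $\Delta u := u_{\max}-u_{\min}$ equals $\Delta v := v_{\max}-v_{\min}$, the bounding box is already a square, and pulling it back to $(x,y)$ gives the desired taxicab circumcircle. Otherwise, without loss of generality $\Delta u > \Delta v$, and I would stretch the $v$-range to length $\Delta u$ while keeping the two $u$-sides of the bounding box fixed. Every $u$-extreme vertex remains on the boundary, while a strictly $u$-middle vertex must be $v$-extreme by the inscribed hypothesis, so I would align the stretched $v$-range so that its appropriate side passes through that vertex. The main obstacle is justifying that this alignment is always consistent: a short counting argument---both $u_{\min}$ and $u_{\max}$ must be attained, so among three vertices at most one can be strictly $u$-middle---shows that the stretch direction is uniquely forced (or free), and Theorem~\ref{onecompletelyinscribedtheorem} supplies a completely inscribed vertex as a corner of the bounding box that anchors the construction. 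Degenerate configurations in which two vertices share a $u$- or $v$-coordinate correspond in $(x,y)$ to a triangle side of slope $\pm 1$ and are handled by placing those two vertices on a common side of the square.
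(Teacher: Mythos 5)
Your argument is correct, but it takes a genuinely different route from the paper. The paper's forward direction is essentially the same as yours (the circle's sides are slope $\pm 1$ support lines through the vertices, so every angle is inscribed), but for the converse the paper stays synthetic: it invokes the alternating property of Theorem \ref{neighboroppositetheorem} to choose lines of slope $1$ and $-1$ through the vertices in alternating fashion, with a maximizing choice at a completely inscribed vertex, and then asserts that the fourth side of the resulting taxicab circle closes up around the triangle. You instead pass to the Chebyshev picture via $u=x+y$, $v=y-x$, observe that an angle is positively (resp.\ negatively) inscribed exactly when its vertex is weakly extreme in $v$ (resp.\ $u$), so that the triangle is inscribed iff every vertex lies on the boundary of its $(u,v)$ bounding box, and then explicitly enlarge the shorter side of that box to a square anchored at the unique (by your counting argument) strictly $u$-middle vertex, which must be $v$-extreme. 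This buys you a fully explicit, coordinate-checkable construction that does not depend on Theorem \ref{neighboroppositetheorem} or on reading enclosure off a figure, and it makes the non-uniqueness phenomena the paper discusses afterward transparent; the paper's approach, in exchange, keeps the argument inside its inscribed-angle framework and reuses that machinery for the incircle theorem. One small remark: your appeal to Theorem \ref{onecompletelyinscribedtheorem} is superfluous---the alignment of the stretched $v$-range is already forced (or free) by the counting argument together with the hypothesis that the strictly $u$-middle vertex is $v$-extreme---so you could drop it without loss.
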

\begin{proof}
If a taxicab circumcircle passes through the vertices of a triangle and completely
encloses the triangle, by definition the angles of the triangle are inscribed since a taxicab circle
is composed of lines of slope 1 and -1.

To prove the converse, suppose the triangle is an inscribed triangle. By Theorem \ref{neighboroppositetheorem}
neighboring angles of the triangle have opposite inscribed angle properties. Therefore, we may
select three lines of slope 1 or -1 through the vertices of the triangle in an alternating fashion to construct
three sides of a taxicab circle around the triangle. Since at least one angle of the triangle is completely inscribed,
this can be done in multiple ways. At the completely inscribed angle, a line should be
selected that maximizes the side of the circle enclosed by the 3 lines (the solid lines of Figure \ref{circumcircle}, with the dotted lines showing the alternate choice). By construction,
the fourth side of the taxicab circle will enclose the triangle.
\end{proof}

\begin{figure}[t]
\centerline{\epsffile{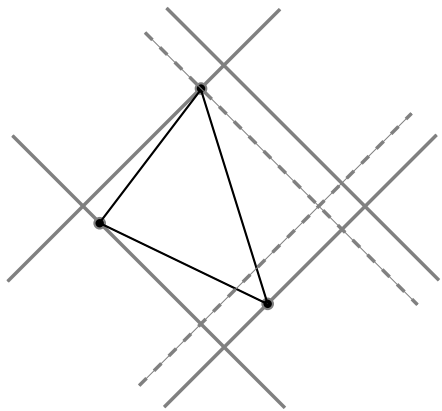}}
\caption{Construction of the taxicab circumcircle of an inscribed triangle}
\label{circumcircle}
\end{figure}

The Triangle Circumcircle Theorem justifies our name for inscribed triangles. This type of triangle is precisely
the kind for which circumscribed circles exist. From this result, we can easily obtain the taxicab version of the
Euclidean geometry ``3 points define a unique circle'' theorem previously investigated from other perspectives in \cite{Colakoglu,Tian}. In particular, the concepts of inscribed angles and triangles are intimately connected with the gradual, steep, and separator lines of the main theorem of \cite{Colakoglu}.

\begin{corollary}
\emph{\textbf{(Three-point Circle Theorem)}} Three non-collinear points lie on a unique
taxicab circle if and only if the triangle they form is inscribed. 
\end{corollary}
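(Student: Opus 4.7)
The plan is to derive both directions of the corollary from the Triangle Circumcircle Theorem just proved.

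For the forward direction, the hypothesis that the three points lie on a unique taxicab circle already supplies such a circle; since taxicab circles are convex, the triangle formed by the three points lies inside it, making the circle a circumcircle of the triangle. The Triangle Circumcircle Theorem then forces the triangle to be inscribed.

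For the reverse direction, existence of a taxicab circumcircle through the three vertices is immediate from the Triangle Circumcircle Theorem, so the substantive task is uniqueness. I plan to extract uniqueness from the inscribed-angle structure of Section 3. Any taxicab circumcircle has four sides of slope $\pm 1$, and each vertex lies on one of them. At a strictly positively inscribed vertex, a slope $-1$ line through the vertex enters the angle, so the circumcircle's side there must have slope $+1$; symmetrically, a strictly negatively inscribed vertex forces slope $-1$. By Theorem~\ref{neighboroppositetheorem} the inscribed-angle properties alternate around the triangle, so the slopes of the three sides through the vertices are pinned down (one slope appearing twice and the other once). The coordinates of the vertices then determine the exact lines containing these three sides, and the fourth side is forced by the equal-spacing property of a taxicab circle ($2r$ equal in both the $x+y$ and $y-x$ directions).

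The main obstacle is treating completely inscribed angles, at which both slopes are locally admissible. My plan here is to invoke the ``maximizing'' choice from the proof of the Triangle Circumcircle Theorem: the alternate slope at a completely inscribed vertex would produce a taxicab circle whose fourth side either cuts across the triangle or leaves a vertex outside, contradicting the enclosing property of a circumcircle. This forces the slope choice at every completely inscribed vertex and hence the uniqueness of the circumcircle.
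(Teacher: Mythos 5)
Your forward direction is fine and is exactly how the corollary is meant to follow from the Triangle Circumcircle Theorem: any taxicab circle through the three vertices contains the triangle by convexity, hence is a circumcircle, hence the triangle is inscribed. The problem is the reverse direction, where you set out to prove literal uniqueness for every inscribed triangle. That claim is false, and the paper itself says so in the paragraph immediately following the corollary: citing Sowell, it notes that taxicab circumcircles are not necessarily unique, with all non-unique cases occurring when a side of the circumcircle passes through two vertices, i.e.\ when a side of the triangle has slope $1$ or $-1$ (Figure \ref{infinitecircumcircles}). For example, a triangle with one side of slope $1$ and another side parallel to an axis is inscribed yet admits arbitrarily large circumcircles, and an isoceles triangle with two sides of slopes $1$ and $-1$ admits a one-parameter family of circumcircles shifted diagonally. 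In exactly these configurations your key steps collapse: two vertices lie on a single side of the circle, so the three vertices no longer determine three distinct lines of prescribed slope, and at the completely inscribed vertices the ``maximizing'' choice is not forced --- the alternate choice (indeed a continuum of positions or radii) still yields a circle whose fourth side encloses the triangle. So the sentence ``the alternate slope at a completely inscribed vertex would produce a taxicab circle whose fourth side either cuts across the triangle or leaves a vertex outside'' is not true in general, and no argument can rescue it, since the conclusion it is meant to establish fails.

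By contrast, the paper does not attempt a uniqueness proof at all: the corollary is presented as an immediate restatement of the Triangle Circumcircle Theorem (three non-collinear points form a triangle, which has a circumcircle if and only if it is inscribed), and the uniqueness caveat is then handled informally, with the degenerate cases catalogued rather than excluded. Your slope-forcing analysis at strictly positively or strictly negatively inscribed vertices is sound, and if you add the hypothesis that no side of the triangle has slope $\pm 1$ (equivalently, no side of a circumcircle contains two vertices), then each vertex lies on its own side of any circumcircle and your argument has a real chance of establishing uniqueness; as written, however, the uniqueness half of your proof proves a statement that the paper's own Figure \ref{infinitecircumcircles} refutes.
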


As noted by Sowell \cite{Sowell}, taxicab triangle circumcircles are not necessarily unique. All non-unique cases occur when one or more sides of the circumcircle pass through two vertices. This often creates flexibility in defining possible circumcircles. Three cases are shown in Figure \ref{infinitecircumcircles}. In the first example, exactly one side has a slope of 1 or -1 and the other sides are of equal taxicab length and longer than the first side (noting that the triangle does not necessarily need to be acute). In this case a circumcircle can be shifted diagonally within a limited range around the triangle. For the second example, one side has slope 1 or -1 and another side is parallel to an axis. Larger and larger circles encompass this type of triangle. These circles also encompass the third example along with another set of circles in a limited range of movement as in the first example. In this case the triangle is isoceles (but not equilateral) with two sides having slope 1 or -1.

\begin{figure}[t]
\epsfysize=1.8in
\centerline{\epsffile{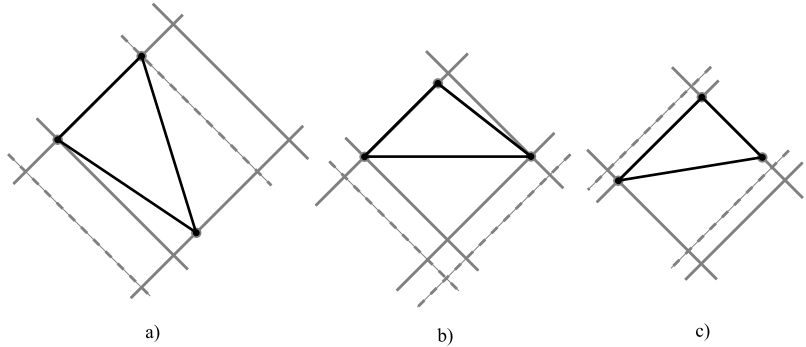}}
\caption{Triangles with infinite circumcircles}
\label{infinitecircumcircles}
\end{figure}

\section{Taxicab Triangle Incircles}

The traditional definition of a triangle incircle is the largest circle
inscribed within the triangle that is tangent to all three sides of
the triangle. Such a circle is guaranteed to exist in Euclidean geometry.
As with circumcircles, a taxicab triangle incircle only exists under
certain conditions. We begin by formalizing the definition of an incircle
in taxicab geometry to clear up concepts like ``tangent'' which
may not transfer precisely.

\begin{definition}
A taxicab triangle {\em incircle}, or inscribed circle, is a taxicab circle entirely contained in a triangle with
three of its corners touching the sides of the triangle.
\end{definition}

\begin{theorem}
\emph{\textbf{(Triangle Incircle Theorem)}} A triangle has a unique taxicab incircle if and only if it is an inscribed triangle.
\end{theorem}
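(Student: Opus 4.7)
The proof splits into two directions. For the forward direction, suppose the triangle has a taxicab incircle $C$. Fix a vertex $V$; the two sides of the triangle at $V$ are supporting lines of the diamond $C$, touching $C$ at corners $c_1$ and $c_2$. If $c_1$ and $c_2$ are adjacent corners of $C$, they are joined by an edge of slope $+1$ or $-1$, and the line through $V$ parallel to this edge lies on $V$'s side of the edge and misses the angle at $V$; the angle is therefore positively or negatively inscribed according to the slope of the edge. If instead $c_1$ and $c_2$ are opposite corners of $C$, then $V$ is forced into an axial position relative to $C$, the two supporting slopes have absolute value strictly less than $1$, and both the slope $+1$ and slope $-1$ lines through $V$ miss the narrow angle between the supporting lines; the angle at $V$ is then completely inscribed. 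Either way, every angle of the triangle is inscribed.

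For the converse, I would set up three tangency equations for a candidate incircle of radius $r$ centered at $(c_x,c_y)$. The slope of each side, together with the fact that the incircle must lie on the interior side of that side, uniquely determines which corner of the prospective diamond is tangent to it: a shallow side ($|m|<1$) forces the tangent corner to be the top or bottom corner (whichever lies on the interior side), while a steep side forces the east or west corner. Writing each tangency condition in the form $a_i c_x + b_i c_y + r\max(|a_i|,|b_i|) = d_i$ produces a $3 \times 3$ linear system in $(c_x,c_y,r)$ whose unique solution is the only possible candidate for the incircle, which will also give uniqueness once existence is secured.

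The hardest step is verifying that this algebraic candidate actually is a valid incircle: that $r > 0$, that the resulting diamond lies entirely inside the triangle rather than poking through a side, and that each tangency actually occurs at the prescribed corner. This is where the inscribed hypothesis enters, via Theorem~\ref{neighboroppositetheorem}: the alternating assignment of inscribed angle types around the triangle forces the three prescribed tangent corners to be three consecutive corners of the diamond (with the fourth corner free inside the triangle), mirroring the case analysis of the forward direction and guaranteeing that the diamond fits snugly inside. With this verification, the candidate realizes the incircle and, by the uniqueness of the linear system's solution, is the only one.
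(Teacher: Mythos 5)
Your forward direction is essentially the paper's own argument: for each vertex you produce a slope $1$ or $-1$ edge of the circle that crosses (or supports) both sides of the angle, so the parallel line through the vertex stays outside and the angle is inscribed; the paper words this the same way, splitting on whether a side of the circle overlaps a side of the triangle. Your converse, however, is a genuinely different route from the paper's. The paper constructs the incircle synthetically: it selects the completely inscribed angle $\gamma$, marks the point $P$ on the opposite side $AB$ at distances $r_\alpha$, $r_\beta$ proportional to the adjacent angle measures, and assembles half the circle from arcs of radius $r_\alpha$, $r_\beta$ centered at $A$ and $B$, the other half staying inside by a slope comparison. You instead solve the three support (tangency) equations $a_i c_x + b_i c_y + r\max(|a_i|,|b_i|) = d_i$, which, if completed, would also give a cleaner uniqueness statement than the paper offers.

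The genuine gap is in the step you yourself flag as hardest, and it is not where you locate it. For \emph{every} triangle, inscribed or not, that $3\times 3$ system has a unique solution with $r>0$, and the resulting diamond automatically lies inside the triangle and touches all three side \emph{lines} on the side segments: containment follows from the three support equalities, and $r>0$ because the three outer open half-planes of a triangle have empty intersection. So ``$r>0$ and the diamond does not poke through a side'' never uses the inscribed hypothesis. What fails for a non-inscribed triangle is only \emph{where} the touching occurs. Concretely, for the triangle with vertices $(-10,0)$, $(10,0)$, $(0,1)$ the system gives center $(0,1/2)$ and $r=1/2$; the diamond touches both slanted sides at its single top corner (which is the apex of the triangle) and the base at its bottom corner, so only two corners meet the boundary and it is not an incircle in the paper's sense. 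Hence the entire content of the converse is the claim you assert but do not prove: that when all three angles are inscribed, the corners assigned to the three sides (top/bottom for shallow sides, east/west for steep ones, chosen by interior position) are three \emph{distinct} corners -- equivalently, that two sides meeting at a vertex are never assigned the same corner, and that the three sides are not all shallow or all steep. Citing Theorem \ref{neighboroppositetheorem} gestures at this, but no argument is given connecting an angle's inscribed type to the corner assignments of its two sides, and that is precisely the geometric heart of the theorem. Two smaller points: the nonsingularity of the linear system is asserted rather than proved (it does hold -- a null vector would force the three $\ell^\infty$-normalized outward normals onto a single edge of the unit square, impossible for a triangle -- but it needs saying), and your uniqueness argument tacitly reads the definition as ``touches all three sides,'' which is the intended reading but stronger than the literal ``three corners touching the sides.''
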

\begin{proof}
Let $\triangle ABC$ be an inscribed triangle. By Theorem \ref{onecompletelyinscribedtheorem}, the triangle has
 at least one completely inscribed angle.
Without loss of generality, let $\gamma$ be an inscribed angle in the triangle at vertex $C$ such that $AC$ and $BC$ have slope
between -1 and 1, inclusive (Figure \ref{incircle}). If the triangle has three inscribed angles,
choose $\gamma$ to be the angle at the intersection of the lines with slope 1 and -1 (see Theorem \ref{threeinscribedanglestheorem}).
For the side $AB$ of the triangle opposite $\gamma$, consider a point $P$ with distance from $B$

\[
r_{\beta}=\frac{\alpha\cdot\overline{AB}}{\alpha+\beta}
\]

The distance from $A$ to $P$ is

\[
r_{\alpha}=\overline{AB}-r_{\beta}=\frac{\beta\cdot\overline{AB}}{\alpha+\beta}
\]

The arcs of taxicab circles of radius $r_{\alpha}$ and $r_{\beta}$
centered on A and B, respectively, intersect $AB$ at the same
point $P$ and by the arc length Theorem 6 of \cite{ThompsonDray} both have a length across the interior of the triangle of

\[
l=r_{\alpha}\cdot\alpha=r_{\beta}\cdot\beta=\frac{\alpha\cdot\beta\cdot\overline{AB}}{\alpha+\beta}
\]

Therefore, these arcs form half a taxicab circle inside the triangle.
The other half of the taxicab circle remains inside the triangle because
the slope of the lines forming the circle are by assumption greater than the slopes
of the lines forming the side of the triangle they intersect.

To prove the converse, assume a triangle has a taxicab incircle. If one or zero sides of the circle overlap the triangle (as in Figure \ref{incircle}),
 both sides of each angle of the triangle are intersected by a line of slope -1 or 1. This implies a
line of the same slope passing through the vertex will remain outside the angle making each angle inscribed.
If two sides of the circle overlap the triangle, the enclosed angle is by definition completely inscribed.
The other two angles are inscribed by the argument above. Therefore, the triangle is an inscribed triangle.
\end{proof}

\begin{figure}[t]
\centerline{\epsffile{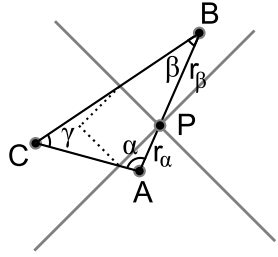}}
\caption{Construction of the taxicab incircle of an inscribed triangle}
\label{incircle}
\end{figure}

For a triangle that is not inscribed, an inscribed taxicab circle still exists in a sense, as noted in passing by Sowell \cite{Sowell}. The enclosed circle technically fails to be an
incircle by our definition because it does not touch some of the sides of the triangle (see Figure \ref{notinscribedtriangle}).
The problem centers on the fact that the triangle is significantly ``wider'' than it is ``tall'' and taxicab
circles are in a sense not as ``flexible'' as Euclidean circles.

\section{Conclusion}

While taxicab incircles and circumcircles exist for some taxicab triangles, their existence is not guaranteed.
The size and position of the angles of the triangle affect the existence of such circles with the
concept of inscribed angles being the defining characteristic. The inscribed angles requirement is perhaps present
in other geometries such as Euclidean geometry, but it may not be as well noted in cases where all angles are inscribed.

\begin{figure}[t]
\centerline{\epsffile{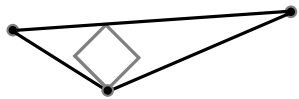}}
\caption{A taxicab circle within a triangle that is not inscribed}
\label{notinscribedtriangle}
\end{figure}

\end{document}